\numberwithin{equation}{subsection}
\theoremstyle{plain}
\newtheorem{theorem}{Theorem}
\newtheorem{corollary}[theorem]{Corollary}
\newtheorem{assertion}[theorem]{Assertion}
\theoremstyle{definition}
\newtheorem{definition}{Definition}
\newtheorem{example}{Example}
\theoremstyle{remark}
\newtheorem{remark}{Remark}
\begin{document}

%\udk{517.984}

%\tmfamily

\begin{center}\textbf{NORMAL EXTENSIONS}\end{center}

\begin{center}\textbf{B.\,N. Biyarov}\end{center} 
\textbf{Key words:} Formally normal operator, normal operator, correct restriction, correct extension.
\\ \\
\textbf{AMS Mathematics Subject Classification:} Primary 47Axx, 47A05; Secondary 47B15.
\\ \\
\textbf{Abstract.}
Let $L_0$ be a densely defined minimal linear operator in a Hilbert space $H$. We prove theorem that if there exists at least one correct extension $L_S$ of $L_0$ with the property $D(L_S)=D(L_S^*)$, then we can describe all correct extensions $L$ with the property $D(L)=D(L^*)$.
We also prove that if $L_0$ is formally normal and there exists at least one correct normal extension $L_N$, then we can describe all correct normal extensions $L$ of $L_0$.  As an example, the Cauchy-Riemann operator is given.

%\footnotetext{}

\subsection{Introduction}
\label{subsec1}

Let us present some definitions, notation, and terminology.

In a Hilbert space $H$, we consider a linear operator $L$ with  domain $D(L)$ and range $R(L)$. 
By the \textit{kernel} of the operator $L$ we mean the set
\[\mbox{Ker}\,L=\bigl\{f\in D(L): \; Lf=0\bigl\}.\]

\begin{definition}
\label{def1}  
An operator $L$ is called a \textit{restriction} of an operator $L_1$, and $L_1$ is called an \textit{extension} of an operator $L$, briefly $L\subset L_1$, if:

1) $D(L)\subset D(L_1)$,

2) $Lf=L_1f$ for all $f$ from $D(L)$.
\end{definition}

\begin{definition}
\label{def2}
A linear closed operator $L_0$ in a Hilbert space $H$ is called \textit{minimal} if $\overline{R(L_0)} \not=H$ and there exists a bounded inverse operator $L_0^{-1}$ on $R(L_0)$. 
\end{definition}

\begin{definition}
\label{def3}
A linear closed operator $\widehat{L}$ in a Hilbert space $H$ is called \textit{maximal} if $R(\widehat{L})=H$ and $\mbox{Ker}\, \widehat{L} \not=\{0\}$. 
\end{definition}

\begin{definition}
\label{def4}
A linear closed operator $L$ in a Hilbert space $H$ is called \textit{correct} if there exists a bounded inverse operator $L^{-1}$ defined on all of $H$. 
\end{definition}

\begin{definition}
\label{def5}
We say that a correct operator $L$ in a Hilbert space $H$ is a \textit{correct extension} of minimal operator $L_0$ (\textit{correct restriction} of maximal operator $\widehat{L}$) if $L_0\subset L$ ($L\subset \widehat{L}$).
\end{definition}

\begin{definition}
\label{def6}
We say that a correct operator $L$ in a Hilbert space $H$ is a \textit{boundary correct} extension of a minimal operator $L_0$ with respect to a maximal operator $\widehat{L}$ if $L$ is simultaneously a correct restriction of the maximal operator $\widehat{L}$ and a correct extension of the minimal operator $L_0$, that is, $L_0\subset L \subset \widehat{L}$.
\end{definition}

At the beginning of the 1950s, Vishik {\cite{Vishik}} extended the theory of self-adjoint extensions of von Neumann--Krein symmetric operators to nonsymmetric operators in Hilbert space.

At the beginning of the 1980s, M.~Otelbaev and his disciples proved abstract theorems that allows us to describe all correct extensions of some minimal operator using any single known correct extension in terms of an inverse operator. Here such extensions need not be restrictions of a maximal operator.
Similarly, all possible correct restrictions of some maximal operator that need not be extensions of a minimal operator were described (see {\cite{Kokebaev}}).
For convenience, we present the conclusions of these theorems.

Let $\widehat{L}$ be a maximal linear operator in a Hilbert space $H$, let $L$ be any known correct restriction of $\widehat{L}$, and let $K$ be an arbitrary linear bounded (in $H$) operator satisfying the following condition:
\begin{equation}\label{1.1}
R(K)\subset \mbox{Ker}\, \widehat{L}.
\end{equation}
Then the operator $L_K^{-1}$ defined by the formula 
\begin{equation}\label{1.2}
L_K^{-1}f=L^{-1}f+Kf,
\end{equation}
describes the inverse operators to all possible correct restrictions $L_K$ of $\widehat{L}$, i.e., $L_K\subset \widehat{L}$.

Let $L_0$ be a minimal operator in a Hilbert space $H$, let $L$ be any known correct extension of $L_0$, and let $K$ be a linear bounded operator in $H$ satisfying the conditions

a) $R(L_0)\subset \mbox{Ker}\,K$,

b) $\mbox{Ker}\,(L^{-1}+K)=\{0\}$,
\\
then the operator $L_K^{-1}$ defined by formula \eqref{1.2}
describes the inverse operators to all possible correct extensions $L_K$ of  $L_0$.

Let $L$ be any known boundary correct extension of $L_0$, i.e., $L_0\subset L\subset \widehat{L}$. The existence of at least one boundary correct extension $L$ was proved by Vishik in {\cite{Vishik}}. Let $K$ be a linear bounded (in $H$) operator satisfying the conditions

a) $R(L_0)\subset \mbox{Ker}\,K$,

b) $R(K)\subset \mbox{Ker}\,\widehat{L}$,
\\
then the operator $L_K^{-1}$ defined by formula \eqref{1.2}
describes the inverse operators to all possible boundary correct extensions $L_K$ of $L_0$. 

Self-adjoint and unitary operators are particular cases of normal operators. A bounded linear operator $N$ in a Hilbert space $H$ is called \textit{normal} if it commutes with its adjoint:
\[N^{*}N=NN^*. \]
The theory of bounded normal operators are sufficiently developed. 

Consider an unbounded linear operator $A$ in a Hilbert space $H$. 

\begin{definition} \label{def7} 
A densely defined closed linear operator $A$ in a Hilbert space $H$ is called \textit{formally normal} if
\[ D(A)\subset D(A^*), \quad \|Af\|=\|A^*f\| \quad \mbox{for all} \; f \in D(A). \]
\end{definition}

\begin{definition} \label{def8} 
A formally normal operator $A$ is called \textit{normal} if
\[D(A)=D(A^*).\]
\end{definition}

Normal extensions of formally normal operators have been studied by many authors (see {\cite{Biriuk}}, {\cite{Coddington2}}, {\cite{Coddington1}}, {\cite{Polyakov}}).
Questions the existence of a normal extension and the description of the domains of normal extensions of a formally normal operator were considered.

The spectral properties of the correct restrictions and extensions were systematically studied by the author (see \cite{Biyarov3}--\cite{Biyarov1}).
In these works a class of operators $K$ that provides Volterra, the completeness of root vectors, and the dissipativity of the correct restrictions and extensions were described. 
The present paper is devoted to the description of correct normal extensions in terms of the operator $K$.

%%%%%%%%%%%%%%%%%%%%%%%%%%%%%%

\subsection{Coincidence criterion of $D(L)$ with $D(L^*)$}
\label{subsec2}

We consider a densely defined minimal linear operator $L_0$ in a Hilbert space $H$. Let  $M_0$ be a minimal operator  with $D(M_0)=D(L_0)$ that is connected with $L_{0}$ by the relation $(L_{0}u,v)=(u,M_{0} v)$ for all $u$,  $v$ from $D(L_0)$. Then the maximal operator $\widehat{L}=M_0^*$ is an extension of $L_{0}$, and the maximal operator $\widehat{M}=L_0^*$ is an extension of $M_{0}$. The following statement is true.

\begin{assertion} \label{assertion2.1} If there exists a correct extension $L_S$ of the minimal operator $L_0$ with the property $D(L_S)=D(L_S^*)$, then the operator $L_S$ is the boundary correct extension, i.e., $L_0\subset L_S \subset\widehat{L}$.
\end{assertion}

\begin{proof} From  $L_0 \subset L_S$ it follows that $L^*_S\subset L_0^* =\widehat{M}$. From  $D(L_S)=D(L_S^*)$ and the fact that $D(M_0)\subset D(L_S^*)$ we have
\[ M_0\subset L_S^*\subset \widehat{M}.\]
Then $L_0\subset L_S \subset\widehat{L}$. The assertion is proved.
\end{proof}

Let there be one fixed correct extension $L_S$ of $L_0$ such that $D(L_S)=D(L_S^*)$.
Then we can describe the inverses to all boundary correct extensions $L$ in the following form
\begin{equation}\label{2.1}
u= L^{-1}f=L_S^{-1}f+Kf \quad \mbox{for all} \; f\in H,
\end{equation}
where $K$ is an arbitrary bounded operator in a Hilbert space $H$ that
\[R(K)\subset \mbox{Ker}\,\widehat{L} \quad \mbox{and} \quad R(L_0)\subset \mbox{Ker}\, K.\]
Each such operator $K$ defines one boundary correct extension and there do not exist other boundary correct extensions.

Let us equip $D(\widehat{L})$ with the graph norm $||u||_G=(||u||^2+||\widehat{L}u||^2)^{1/2}$.
Since $\widehat{L}$ is a closed operator, we obtain a Hilbert space with the scalar product 
\[(u, v)_G=(u, v)+(\widehat{L}u, \widehat{L}v) \; \mbox{for all} \; u, \: v \; \mbox{from} \; D(\widehat{L}).\]
Let us denote this space by $G_{\widehat{L}}$. The domain $D(L_S)$ of the correct restriction $L_S$ is a subspace in $G_{\widehat{L}}$. Therefore, there exists a projection operator of $G_{\widehat{L}}$ on the subspace $D(L_S)$. As such a projection operator, we take $L_S^{-1}\widehat{L}$.
Then the projection $\Gamma_{L_S} = I-L_S^{-1}\widehat{L}$ of $G_{\widehat{L}}$ on the subspace $\mbox{Ker}\, \widehat{L}$. It is obvious that 
\[\mbox{Ker}\, \Gamma_{L_S} =D(L_S) \quad \mbox{and} \quad R(\Gamma_{L_S}) =\mbox{Ker}\, \widehat{L}.\]
All boundary correct extensions \eqref{2.1} transforms into
\[L^{-1}f=L_S^{-1}f+Kf=L_S^{-1}f+K\widehat{L}L_S^{-1}f=(I+K\widehat{L})L_S^{-1}f \; \mbox{for all} \; f \; \mbox{from} \; H,\]
where $I$ is the identity operator in $H$. In virtue of
$D(L) \subset D(\widehat{L})$, we have
\[ \widehat{L}u=f \; \mbox{for all} \; f \; \mbox{from} \;\: H, \;\; u\ \; \mbox{from} \;D(L)\]
where
\[ D(L)=\bigl\{ u\in D(\widehat{L}): \;(I-K\widehat{L})u\in D(L_S) \bigr\}.\]
It is easy to see that the operator $K$ defines the domain of $L$, as (see \cite{Biyarov2})
\[(I-K\widehat{L})D(L)=D(L_S), \quad (I+K\widehat{L})D(L_S)=D(L), \quad (I-K\widehat{L})=(I+K\widehat{L})^{-1}.\]
Therefore, all boundary correct extensions $L$ are differed from fixed boundary correct extension $L_S$ only the domain.
The bounded (in $G_{\widehat{L}}$) operator $I-K\widehat{L}$ maps $D(L)$ onto $D(L_S)$ in a one-to-one fashion.
Then the domain of $L$ can be defined as follows:
\[ D(L)=\bigl\{u\in D(\widehat{L}): \;\Gamma_{L_S}(I-K\widehat{L})u=0 \bigr\}.\]
There exists one more representation of the domain of $L$
\[ D(L)=\bigl\{u\in D(\widehat{L}): \;((I-K\widehat{L})u,L^*_S v)=(\widehat{L}u,v)\; \mbox{for all} \; v \; \mbox{from} \; D(L_S^*) \; \bigr\}.\]
Similarly we can define
\[ \Gamma_{L_S^*}=I-L_S^{*^{-1}}\widehat{M} \]
and
\[ D(L^*)=\bigl\{u\in D(\widehat{M}): \;\Gamma_{L_S^*}(I-K^*\widehat{M})u=0 \bigr\}.\]
Now we can formulate the following result:

\begin{theorem}\label{Theorem2.2} Let there be a correct extension $L_S$ of the minimal operator $L_{0}$ with $D(L_S)=D(L_S^*)$, then any other correct extension $L$ has the property $D(L)=D(L^*)$  if and only if $L_0\subset L\subset \widehat{L}$ and the operator $K$ from the formula \eqref{2.1} satisfies the conditions
\[R(K)\cup R(K^*)\subset D(\widehat{L})\cap D(\widehat{M}),\]
and
\begin{equation}\label{2.2}
\left \{
\begin{array}{lll}
& \Gamma_{L_S} (I-K\widehat{L})u=0, \\
& \Gamma_{L_S} K^*\widehat{M}u=K\widehat{L}u, \;\;\; \mbox{for all} \;\; u\in D(\widehat{L})\cap D(\widehat{M}), \\
\end{array}
\right.
\end{equation}
where $\Gamma_{L_S} =I-L_S^{-1}\widehat{L}$ is the projection defined above.
\end{theorem}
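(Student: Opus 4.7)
The plan is to prove both directions of the biconditional, leveraging the parametrization \eqref{2.1} and the projection $\Gamma_{L_S}$ developed in this section.

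For the forward direction, assume $L$ is a correct extension of $L_{0}$ with $D(L)=D(L^*)$. An argument parallel to Assertion~\ref{assertion2.1} forces $L_{0}\subset L\subset \widehat{L}$: from $L_{0}\subset L$ we get $L^*\subset L_{0}^*=\widehat{M}$, and since $D(M_{0})\subset D(L)=D(L^*)$ we deduce $M_{0}\subset L^*\subset \widehat{M}$, hence $L_{0}\subset L\subset \widehat{L}$. Thus $L=L_{K}$ for some bounded $K$ as in \eqref{2.1}. To verify $R(K)\cup R(K^*)\subset D(\widehat{L})\cap D(\widehat{M})$: writing $Kf=L^{-1}f-L_{S}^{-1}f$ for arbitrary $f\in H$, the first summand lies in $D(L)=D(L^*)\subset D(\widehat{M})$ and the second in $D(L_{S})=D(L_{S}^*)\subset D(\widehat{M})$, so $R(K)\subset D(\widehat{M})$; combined with $R(K)\subset \mbox{Ker}\,\widehat{L}\subset D(\widehat{L})$ we get the claim, and the symmetric argument applied to $L^*$ treats $R(K^*)$.

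To derive the system \eqref{2.2}, I take any $u\in D(L)=D(L^*)$ and use the two simultaneous parametrizations
\[ u=L_{S}^{-1}\widehat{L}u+K\widehat{L}u, \qquad u=(L_{S}^*)^{-1}\widehat{M}u+K^*\widehat{M}u. \]
The first is line~1 of \eqref{2.2}. Subtracting the two and applying $\Gamma_{L_S}$, three facts collapse most of the terms: $\Gamma_{L_S}L_{S}^{-1}=0$ since $R(L_{S}^{-1})=D(L_{S})=\mbox{Ker}\,\Gamma_{L_S}$; $(L_{S}^*)^{-1}\widehat{M}u\in D(L_{S}^*)=D(L_{S})=\mbox{Ker}\,\Gamma_{L_S}$, invoking the hypothesis $D(L_{S})=D(L_{S}^*)$; and $\Gamma_{L_S}K=K$ since $R(K)\subset \mbox{Ker}\,\widehat{L}$ and $\Gamma_{L_S}$ restricts to the identity on $\mbox{Ker}\,\widehat{L}$. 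What survives is precisely line~2 of \eqref{2.2}.

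For the reverse direction, assume $L_{0}\subset L\subset\widehat{L}$ with $K$ satisfying the stated conditions. Given $u\in D(L)$, the defining relation $u=L_{S}^{-1}\widehat{L}u+K\widehat{L}u$ together with the inclusions $R(K)\subset D(\widehat{M})$ and $D(L_{S})\subset D(\widehat{M})$ place $u\in D(\widehat{L})\cap D(\widehat{M})$. Substituting line~2 of \eqref{2.2} to rewrite $K\widehat{L}u=K^*\widehat{M}u-L_{S}^{-1}\widehat{L}K^*\widehat{M}u$ and regrouping should yield $u=(L_{S}^*)^{-1}\widehat{M}u+K^*\widehat{M}u$, hence $u\in D(L^*)$; the opposite inclusion follows from the adjoint-mirror of the same manipulation, using the characterization of $D(L^*)$ via $\Gamma_{L_S^*}$ that appears just before the theorem. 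The main technical obstacle is the closing step of this reverse direction: the regrouping leaves an identity of the form $L_{S}^{-1}\widehat{L}(u-K^*\widehat{M}u)=(L_{S}^*)^{-1}\widehat{M}u$ that must be verified, and this is precisely where the hypothesis $D(L_{S})=D(L_{S}^*)$ does its real work, forcing the projections $\Gamma_{L_S}$ and $\Gamma_{L_S^*}$ to act compatibly under the natural duality between $G_{\widehat{L}}$ and $G_{\widehat{M}}$. The dual bilinear characterization of $D(L)$ stated just before the theorem, involving $((I-K\widehat{L})u, (L_{S}^*)^{-1}v)=(\widehat{L}u, v)$ for $v\in D(L_{S}^*)$, is likely the cleanest tool for making this identification precise.
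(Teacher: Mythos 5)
Your forward direction is sound and is essentially the paper's argument in a cleaner packaging: the paper derives the range conditions and the second line of \eqref{2.2} by manipulating the identity $L_S^{-1}f+Kf=(L_S^*)^{-1}g+K^*g$ in $f$ and $g$, whereas you work directly with the two decompositions of $u\in D(L)=D(L^*)$ and apply $\Gamma_{L_S}$; the three cancellation facts you invoke are all correct (the last application needs $R(K^*)\subset D(\widehat{L})$, which you have already established at that point, so $\Gamma_{L_S}K^*\widehat{M}u$ is well defined).

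The converse, however, is not proved. You reduce it to the identity $L_S^{-1}\widehat{L}(u-K^*\widehat{M}u)=(L_S^*)^{-1}\widehat{M}u$ and then stop, saying it ``must be verified'' and that the bilinear characterization of $D(L)$ ``is likely the cleanest tool.'' That identity is the crux of the entire converse, and the bilinear characterization is not what closes it. What actually closes it (and is what the paper does, phrased in terms of $f=\widehat{L}u$) is the pair of facts $R(K^*)\subset\mbox{Ker}\,\widehat{M}$ (which follows from $R(L_0)\subset\mbox{Ker}\,K$) and $D(L_S)=D(L_S^*)$, used as follows. Apply $\Gamma_{L_S^*}=I-(L_S^*)^{-1}\widehat{M}$ to the second line of \eqref{2.2}: since $\Gamma_{L_S}v-v\in D(L_S)=D(L_S^*)=\mbox{Ker}\,\Gamma_{L_S^*}$ and $\Gamma_{L_S^*}$ acts as the identity on $\mbox{Ker}\,\widehat{M}\supset R(K^*)$, this yields $K^*\widehat{M}u=K\widehat{L}u-(L_S^*)^{-1}\widehat{M}K\widehat{L}u$. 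Hence
\[ u-K^*\widehat{M}u=L_S^{-1}\widehat{L}u+(L_S^*)^{-1}\widehat{M}K\widehat{L}u\in D(L_S^*), \]
and $\widehat{M}(u-K^*\widehat{M}u)=\widehat{M}u$ because $\widehat{M}K^*=0$; applying $(L_S^*)^{-1}$ gives exactly your missing identity, whence $u=((L_S^*)^{-1}+K^*)\widehat{M}u=(L^*)^{-1}\widehat{M}u\in D(L^*)$. The opposite inclusion $D(L^*)\subset D(L)$ then requires the mirrored computation (as in the paper), which rests on the same mechanism, so it cannot be dismissed as an ``adjoint-mirror'' until the above is actually in place.
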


\begin{proof}
Let $D(L)=D(L^*)$. In view of Assertion \ref{assertion2.1}, the operators $L_S$ and $L$ turn out to be boundary correct extensions of $L_{0}$, i.e., $L_0\subset L_S\subset \widehat{L}$ and $L_0\subset L\subset \widehat{L}$. The inverse to the arbitrary boundary correct extension $L$ has the form \eqref{2.1}. Then
\[(L^*)^{-1}g=(L_S^*)^{-1}g+K^*g \quad \mbox{for all} \quad g\in H. \]
The condition $D(L)=D(L^*)$ is equivalent to 
\begin{equation}\label{2.3}
L_S^{-1}f+Kf=(L_S^*)^{-1}g+K^*g,
\end{equation}
where for each $f\in H$ there exists $g\in H$ and vice versa, for each $g\in H$ there exists $f\in H$ that the equality  \eqref{2.3} is fulfilled. 
It follows from  \eqref{2.3} that 
\[R(K^*)\subset D(\widehat{L}) \quad \mbox{and} \quad R(K)\subset D(\widehat{M}).\]
Then we get
\[R(K)\cup R(K^*)\subset D(\widehat{L})\cap D(\widehat{M}).\]
Acting on both sides of equality \eqref{2.3} by the operator $\widehat{L}$, we obtain
\[ f=L_S(L_S^*)^{-1}g+\widehat{L}K^*g, \quad \mbox{for all} \quad g\in H. \]
Substituting $f$ into \eqref{2.3}, we obtain the equality
\[ L_S^{-1}\widehat{L}K^*g+KL_S(L_S^*)^{-1}g+K\widehat{L}K^*g=K^*g. \]
It follows that
\[ (I-L_S^{-1}\widehat{L})K^*g=K\widehat{L}((L_S^*)^{-1}+K^*)g.\]
This means that
\[ (I-L_S^{-1}\widehat{L})K^*g=K\widehat{L}(L^*)^{-1}g. \]
If $L^{*^{-1}}g$ is replaced by $u$, then
\[(I-L_S^{-1}\widehat{L})K^*\widehat{M}u=K\widehat{L}u,\quad u\in D(L^*).\]
Since $D(L)= D(L^*)$ we obtain $\Gamma_{L_S} K^*\widehat{M}u=K\widehat{L}u $ for all $u$ from $D(L).$ This is equivalent to the condition \eqref{2.2}.

We now prove a converse of this theorem. Let $L_0\subset L\subset \widehat{L}$ and the operator $K$ from the formula \eqref{2.1} satisfies the conditions
$R(K)\cup R(K^*)\subset D(\widehat{L})\cap D(\widehat{M}),$
and \eqref{2.2}.
Hence, it is easy to see that
\[ D(L)\cup D(L^*)\subset D(\widehat{L})\cap D(\widehat{M}). \]
Since $Lu=f \; \mbox{for all} \; u \in D(L)$, we may replace $Lu$ by $f$ in the second equation of the condition \eqref{2.2}.
Then
\[ \Gamma_{L_S} K^*\widehat{M}L^{-1}f=Kf \quad \mbox{for all} \; f\in H. \]
Acting on both sides of this equality by the projection $\Gamma_{L_S^*}$, we obtain
\[ K^*\widehat{M}L^{-1}f=(I-(L_S^*)^{-1}\widehat{M})Kf \quad \mbox{for all} \; f\in H. \]
Note that
\[ K^*L_S^*L_S^{-1}f+K^*\widehat{M}Kf+(L_S^*)^{-1}\widehat{M}Kf=Kf. \]
Adding the bounded operator $L_S^{-1}f$ to both sides, we get
\[(L_S^*)^{-1}L_S^*L_S^{-1}f+K^*L_S^*L_S^{-1}f+K^*\widehat{M}Kf+(L_S^*)^{-1}\widehat{M}Kf=Kf+L_S^{-1}f.\]
It follows that
\[ (L_S^*)^{-1}(L_S^*L_S^{-1}+\widehat{M}K)f+K^*(L_S^*L_S^{-1}+\widehat{M}K)f=L^{-1}f \quad
\mbox{for all} \; f\in H. \]
If we denote by
\[ g=L_S^*L_S^{-1}f+\widehat{M}Kf \quad \mbox{for all} \; f \in H, \]
then we have
\[ (L^*)^{-1}g=L^{-1}f \quad \mbox{for all} \; f \in H. \]
It follows that $D(L)\subset D(L^*)$. 
Acting on both sides of the equations \eqref{2.2} by the projection $\Gamma_{L_S^*}$, we get
\[
\left \{
\begin{array}{lll}
& \Gamma_{L_S^*} (I-K\widehat{L})u=0, \\
& \Gamma_{L_S^*} K\widehat{L}u=K^*\widehat{M}u \;\;\; \mbox{for all} \;\; u\in D(\widehat{L})\cap D(\widehat{M}). \\
\end{array}
\right.
\]
By the second equation of the given system, we can rewrite this system of equations in the form
\[
\left \{
\begin{array}{lll}
& \Gamma_{L_S^*} (I-K^*\widehat{M})u=0, \\
& \Gamma_{L_S^*} K\widehat{L}u=K^*\widehat{M}u \;\;\; \mbox{for all} \;\; u\in D(\widehat{L})\cap D(\widehat{M}). \\
\end{array}
\right.
\]
The first equation of this system means that $u$ belongs to $D(L^*)$.
Then we denote $L^*u=g$. Therefore, $u=(L^*)^{-1}g$ for all $g$ from $H$.
Then the second equation of this system has the form
\[\Gamma_{L_S^*} K\widehat{L}(L^*)^{-1}g=K^*\widehat{M}(L^*)^{-1}g \;\;\; \mbox{for all} \;\; g\in H.\]
Acting on both sides of this equality by the projection $\Gamma_{L_S}$, we obtain
\[ K\widehat{L}(L^*)^{-1}g=(I-(L_S)^{-1}\widehat{L})K^*g \quad \mbox{for all} \; g\in H. \]
Note that
\[ KL_S(L_S^*)^{-1}g+K\widehat{L}K^*g+L_S^{-1}\widehat{L}K^*g=K^*g. \]
Adding the bounded operator $(L_S^*)^{-1}g$ to both sides, we get
\[L_S^{-1}L_S(L_S^*)^{-1}g+KL_S(L_S^*)^{-1}g+K\widehat{L}K^*g+L_S^{-1}\widehat{L}K^*g=K^*g+(L_S^*)^{-1}g.\]
It follows that
\[ L_S^{-1}(L_S(L_S^*)^{-1}+\widehat{L}K^*)g+K(L_S(L_S^*)^{-1}+\widehat{L}K^*)g=(L^*)^{-1}g \quad
\mbox{for all} \; g\in H. \]
If we denote by
\[ f=(L_S(L_S^*)^{-1}+\widehat{L}K^*)g \quad \mbox{for all} \; g \in H, \]
then we have
\[ L^{-1}f=(L^*)^{-1}g \quad \mbox{for all} \; g \in H. \]
It follows that $D(L^*)\subset D(L)$. 
The theorem is proved.
\end{proof}

%%%%%%%%%%%%%%%%%%%%%%%%%%%%%%

\subsection{Normality criterion of correct extensions}
\label{section3}
Let $L_0$ be a formally normal minimal operator in a Hilbert space $H$. An operator $M_0$ is the restriction of $L_0^*=\widehat{M}$ to $D(L_0)$. Then $\widehat{L}=M_0^*$ defines the maximal operator that $L_0\subset\widehat{L}$.
Let there be at least one normal correct extension $L_N$ of the formally normal minimal operator $L_0$.
In view of Assertion \ref{assertion2.1}, we have that $L_0\subset L_N \subset \widehat{L}$, i.e., $L_N$ is the boundary correct extension.
Then the inverses to all boundary correct extensions $L$ of $L_0$ have the form
\begin{equation}\label{3.1}
u= L^{-1}f=L_N^{-1}f+Kf \quad \mbox{for all} \; f \in H,
\end{equation}
where $K$ is an arbitrary bounded operator in a Hilbert space $H$ that $R(K)\subset \mbox{Ker}\, \widehat{L}$ and $R(L_0)\subset \mbox{Ker}\, K$.
Then the direct operator $L$ acts as
\[ \widehat{L}u=f \quad \mbox{for all} \; f\in H, \]
on the domain
\[ D(L)=\bigl\{u\in D(\widehat{L}): \; \Gamma_{L_N} (I-K\widehat{L})u=0 \bigr\}, \]
where the projection $\Gamma_{L_N} =I-L_N^{-1}\widehat{L}$ is the bounded operator in the space $G_{\widehat{L}}$. It is known that
\[\mbox{Ker}\,\Gamma_{L_N}=D(L_N) \, \mbox{ and } \, R(\Gamma_{L_N})=\mbox{Ker}\, \widehat{L}.\]

\begin{theorem}
\label{Theorem3.1} Let there be one correct normal extension  $L_N$ of the formally normal minimal operator $L_0$ in a Hilbert space $H$. Then any other correct extension $L$ of $L_0$ is normal if and only if $L_0\subset L\subset \widehat{L}$ and operator $K$ from the formula \eqref{3.1} satisfies the conditions:
\[R(K)\cup R(K^*)\subset D(\widehat{L})\cap D(\widehat{M}),\]
\begin{equation}\label{3.2}
\left \{
\begin{array}{lll}
& \Gamma_{L_N} (I-K\widehat{L})u=0, \\
& \Gamma_{L_N} K^*\widehat{M}u=K\widehat{L}u \;\;\; \mbox{for all} \;\; u\in D(\widehat{L})\cap D(\widehat{M}), \\
\end{array}
\right.
\end{equation}
and
\begin{equation}\label{3.3}
\widehat{L}K^*=(\widehat{M}K)^*,
\end{equation}
where $\Gamma_{L_N}=I-L_N^{-1}\widehat{L}$ is projection on $\mbox{Ker}\, \widehat{L}$. 
\end{theorem}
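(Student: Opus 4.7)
The plan is to separate the two clauses in ``$L$ is normal''---the domain equality $D(L)=D(L^*)$ and the norm equality $\|Lu\|=\|L^*u\|$ on $D(L)$---and treat them independently. The first clause is already handled by Theorem~\ref{Theorem2.2}: since $L_N$ is normal, in particular $D(L_N)=D(L_N^*)$, so Theorem~\ref{Theorem2.2} applied with $L_S=L_N$ gives $D(L)=D(L^*)$ if and only if $L_0\subset L\subset\widehat L$, $R(K)\cup R(K^*)\subset D(\widehat L)\cap D(\widehat M)$, and condition \eqref{3.2} holds (it is literally \eqref{2.2} specialised to $L_S=L_N$). It therefore suffices to prove that, under these, the norm equality on $D(L)$ is equivalent to \eqref{3.3}.

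For that step I will introduce the bounded operators
\[V:=\widehat M L^{-1},\qquad \widetilde V:=\widehat L (L^*)^{-1},\]
which are everywhere defined on $H$ (using $D(L_N)\subset D(\widehat M)$, $D(L_N^*)\subset D(\widehat L)$, and $R(K)\cup R(K^*)\subset D(\widehat L)\cap D(\widehat M)$) and bounded by the closed graph theorem. Since $L$ acts as $\widehat L$ on $D(L)$ and $L^*$ acts as $\widehat M$ on $D(L^*)$, a direct computation gives $V\widetilde V=\widetilde V V=I$, so $\widetilde V=V^{-1}$. For $u=L^{-1}f\in D(L)=D(L^*)$ one has $Lu=f$ and $L^*u=\widehat M u=Vf$, so the norm equality on $D(L)$ is equivalent to $V$ being isometric; combined with the two-sided inverse $\widetilde V$, this upgrades to $V$ being unitary, that is, $V^*=\widetilde V$.

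Setting $U:=L_N^*L_N^{-1}$, the normality of $L_N$ makes $U$ unitary with $U^*=L_N(L_N^*)^{-1}$, and also gives the identifications $\widehat M L_N^{-1}=U$ and $\widehat L (L_N^*)^{-1}=U^*$. Substituting \eqref{3.1} then yields $V=U+\widehat M K$ and $\widetilde V=U^*+\widehat L K^*$. Since $\widehat M K$ is bounded on $H$ (closed graph theorem again), adjoints may be taken summand by summand, so $V^*=U^*+(\widehat M K)^*$. Comparing with $\widetilde V$ collapses $V^*=\widetilde V$ to the single equation $(\widehat M K)^*=\widehat L K^*$, which is precisely \eqref{3.3}; tracing the chain of equivalences backwards gives the converse. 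The main obstacle I anticipate is the unbounded-operator bookkeeping---verifying $V\widetilde V=I$ requires $(L^*)^{-1}g\in D(\widehat L)$, which is available only after $D(L)=D(L^*)\subset D(\widehat L)$ has been secured via Theorem~\ref{Theorem2.2}---but once the domains are properly aligned the rest is routine algebra carried out under the safety net of the closed graph theorem.
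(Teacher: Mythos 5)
Your proposal is correct, and it takes a genuinely different route from the paper in the key step. Both proofs dispose of the domain condition identically, by citing Theorem~\ref{Theorem2.2} with $L_S=L_N$ (together with Assertion~\ref{assertion2.1} to place $L$ between $L_0$ and $\widehat L$ so that $K$ in \eqref{3.1} is defined), so the real content is the equivalence of \eqref{3.3} with the remaining normality clause. The paper handles that clause by passing to the \emph{bounded} inverse: it encodes normality of $L$ as the commutation $L^{-1}(L^*)^{-1}=(L^*)^{-1}L^{-1}$, expands via \eqref{3.1}, and extracts \eqref{3.3} by successively applying $\widehat L$, taking adjoints, and applying $\widehat M$; the converse there requires the auxiliary identity $(L_N^*L_N^{-1})^*=L_N(L_N^*)^{-1}$, proved through the lemma $(AB)^*=\overline{B^*A^*}$. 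You instead keep the definition of normality in its domain-plus-isometry form and observe that the norm equality on $D(L)$ is exactly the statement that the bounded invertible operator $V=\widehat M L^{-1}=L^*L^{-1}$ is unitary, i.e. $V^*=V^{-1}=\widehat L(L^*)^{-1}$; after splitting off the unitary $U=L_N^*L_N^{-1}$ coming from the normality of $L_N$, this collapses to $(\widehat M K)^*=\widehat L K^*$, which is \eqref{3.3}, and the chain reverses. Your route buys transparency --- it exhibits \eqref{3.3} as precisely the unitarity defect of $L^*L^{-1}$ relative to $L_N^*L_N^{-1}$, avoids invoking the equivalence between normality of $L$ and of $L^{-1}$, and obtains $U^*=L_N(L_N^*)^{-1}$ from unitarity rather than from the closure-of-product lemma --- at the cost of the domain bookkeeping you already flag, all of which is indeed secured once Theorem~\ref{Theorem2.2} gives $D(L)=D(L^*)\subset D(\widehat L)\cap D(\widehat M)$ and the range hypotheses on $K$, $K^*$ make $\widehat M K$ and $\widehat L K^*$ everywhere defined, closed, and hence bounded.
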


\begin{proof} Let $L$ be a normal correct extension of the formally normal operator $L_0$. In view of Theorem \ref{Theorem2.2}, the conditions $L_0\subset L\subset \widehat{L}$, $R(K)\cup R(K^*)\subset D(\widehat{L})\cap D(\widehat{M})$ and
 \eqref{3.2} will be fulfilled. The normality of $L^{-1}$ follows from the normality of $L$:
\[ L^{-1}(L^*)^{-1}=(L^*)^{-1}L^{-1}. \]
By virtue of \eqref{3.1}, we obtain
\[
(L_N^{-1}+K)((L_N^*)^{-1}+K^*)f =((L_N^*)^{-1}+K^*)(L_N^{-1}+K)f \quad \mbox{for all} \;\; f \in H.
\]
It follows that
\begin{equation}\label{3.4}
L_N^{-1}K^*f+K(L_N^*)^{-1}+KK^*f =(L_N^*)^{-1}Kf+K^*L_N^{-1}f+K^*Kf.
\end{equation}
Acting on both sides of the equality \eqref{3.4} by the operator $\widehat{L}$, we get
\[ K^*f=L_N(L_N^*)^{-1}Kf+\widehat{L}K^*L_N^{-1}f+\widehat{L}K^*Kf. \]
Taking conjugates of both sides of the equality above, we have
\[ Kf=K^*(L_N(L_N^*)^{-1})^*f+(L_N^*)^{-1}(\widehat{L}K^*)^*f+ K^*(\widehat{L}K^*)^*f \quad \mbox{for all} \;\; f \in H. \]
Acting on both sides by the operator $\widehat{M}$, we obtain
\[ \widehat{M}Kf=(\widehat{L}K^*)^*f \quad \mbox{for all} \;\; f \in H. \]
This is equivalent to
\[ \widehat{L}K^*=(\widehat{M}K)^*. \]

Let us prove the converse. 
Suppose that the conditions of Theorem \ref{Theorem3.1} hold. 
From the conditions $L_0\subset L\subset \widehat{L}$, $R(K)\cup R(K^*)\subset D(\widehat{L})\cap D(\widehat{M})$ and \eqref{3.2}, in view of Theorem \ref{Theorem2.2}, we have that $D(L)=D(L^*)$. Then $\mbox{for all} \; f \in H$ there exists $g \in H$ such that $L^{-1}f=(L^*)^{-1}g$. It can be rewritten in the form
\begin{equation}\label{3.5}
L_N^{-1}f+Kf=(L_N^*)^{-1}g+K^*g.
\end{equation}
Acting on both sides by the operator $\widehat{M}$, we get
\[g=L_N^*L_N^{-1}f+\widehat{M}Kf.\]
Substituting $g$ into \eqref{3.5}, we have
\[ Kf=(L_N^*)^{-1}\widehat{M}Kf+K^*L_N^*L_N^{-1}f+K^*\widehat{M}Kf \quad \mbox{for all} \;\; f \in H. \]
Then
\begin{equation}\label{3.6}
K^*f=(\widehat{M}K)^*L_N^{-1}f+(L_N^*L_N^{-1})^*Kf+(\widehat{M}K)^*Kf \quad \mbox{for all} \;\; f \in H.
\end{equation}
Let us show that
\[ (L_N^*L_N^{-1})^*=L_N(L_N^*)^{-1}. \]
It is known that if $A$ is a closed operator, $B$ is bounded in $H$ and $AB$ is densely defined in $H$, then 
\[(AB)^*=\overline{B^*A^*},\] 
where the overbar denotes the closure operator. Note that
\[ L_N^*L_N^{-1}\supset L_N^{-1}L_N^*. \]
Then
\[ (L_N^*L_N^{-1})^*=\overline{(L_N^*)^{-1}L_N}\subset L_N(L_N^*)^{-1}. \]
Taking into account the fact that $L_N(L_N^*)^{-1}$ is the bounded operator that coincides with  $(L_N^*)^{-1}L_N$ on the dense set $D(L_N)$, then we obtain that
\[ L_N(L_N^*)^{-1}=\overline{(L_N^*)^{-1}L_N}= (L_N^*L_N^{-1})^*. \]
Then, taking into account \eqref{3.3}, the equality \eqref{3.6} can be rewritten in the form
\[ K^*f=\widehat{L}K^*L_N^{-1}f+L_N(L_N^*)^{-1}Kf+\widehat{L}K^*Kf \quad \mbox{for all} \;\; f \in H.\]
Adding $(L_N^*)^{-1}f$ to both sides of the last equality, we get
\[ K^*f+(L_N^*)^{-1}f=L_N L_N^{-1}(L_N^*)^{-1}f+ \widehat{L}K^*L_N^{-1}f+L_N(L_N^*)^{-1}Kf+\widehat{L}K^*Kf.\]
It follows that
\[ (L^*)^{-1}f=L(L^*)^{-1}L^{-1}f\;\;\; \mbox{for all} \;\; f \in H. \]
Thus
\[ L^{-1}(L^*)^{-1}f=(L^*)^{-1}L^{-1}f\;\;\; \mbox{for all} \;\; f \in H. \]
The proof is complete.
\end{proof}

The domain of $L_S$ described as the kernel of the projection $\Gamma_{L_S}=I-L_S^{-1}\widehat{L}$. Here the operator $L_S^{-1}$ takes part in the explicit form. Sometimes there exists another operator $T_{L_S}$ defined on $D(\widehat{L})$ and has the property $\mbox{Ker}\, \Gamma_{L_S}=\mbox{Ker}\, T_{L_S}$. Between these operators have the following relationship
\[ T_{L_S}\Gamma_{L_S} v=T_{L_S}(I-L_S^{-1}\widehat{L})v=T_{L_S}v-T_{L_S}L_S^{-1}\widehat{L}v=T_{L_S}v \quad \mbox{for all} \;\; v\in D(\widehat{L}). \]
If we know $T_{L_S}v$, then $\Gamma_{L_S} v$ is uniquely determined as the solution of the homogeneous equation $\widehat{L} (\Gamma_{L_S} v)=0$ with an inhomogeneous condition
\[ T_{L_S}(\Gamma_{L_S} v)=T_{L_S}v. \]
Its unique solvability follows from the correctness of the operator $L_S$.
Therefore, it is not necessary to know the explicit form of the operator $L_S^{-1}$. In the study of differential operators (see \cite{Vishik}) that the operator $T_{L_S}$ is realized in the form of the boundary operator.
In such cases we say that the domain is described in terms of the boundary operator. For example, in the case of the Dirichlet problem for a differential equation of elliptic type in $L_2(\Omega)$ that $T_{L_S}$ corresponds to the trace operator on the boundary of $\Omega$, i.e., $T_{L_S}u=u\mid _{\partial \Omega}$.
Therefore it is sufficient to know the form of the boundary operator $T_{L_S}$.  
Thus we obtain the following

\begin{corollary}\label{corollary3.2}
Let there be a correct extension $L_S$ of the minimal operator $L_{0}$ with $D(L_S)=D(L_S^*)$, then any other correct extension $L$ has the property $D(L)=D(L^*)$  if and only if $L_0\subset L\subset \widehat{L}$, $R(K)\cup R(K^*)\subset D(\widehat{L})\cap D(\widehat{M})$ and
\begin{equation}\label{3.7}
 T_{L_S}(K^*\widehat{M}-K\widehat{L})u=0
 \quad \mbox{for all} \;\; u\in D(L),
\end{equation}
where $T_{L_S}$ is a boundary operator corresponding to the fixed correct extension $L_S$ and
\[ D(L)=\bigl \{u\in D(\widehat{L}): \; T_{L_S}(I-K\widehat{L})u=0 \bigr\}. \]
\end{corollary}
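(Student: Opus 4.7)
The plan is to derive the corollary directly from Theorem \ref{Theorem2.2} by replacing the projection $\Gamma_{L_S}$ with the boundary operator $T_{L_S}$ throughout. The key fact, recorded in the paragraph immediately before the statement, is that $T_{L_S}$ is defined on $D(\widehat{L})$ and satisfies $\ker T_{L_S} = \ker \Gamma_{L_S}$. The hypotheses $L_0 \subset L \subset \widehat{L}$ and $R(K) \cup R(K^*) \subset D(\widehat{L}) \cap D(\widehat{M})$ carry over verbatim, so the real work lies in recasting the two equations of \eqref{2.2}.

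For the first equation of \eqref{2.2}, the inclusion $R(K) \subset D(\widehat{L})$ ensures $(I - K\widehat{L})u \in D(\widehat{L})$, so $T_{L_S}$ is applicable to it. The kernel identity then converts $\Gamma_{L_S}(I - K\widehat{L})u = 0$ into $T_{L_S}(I - K\widehat{L})u = 0$, which is precisely the description of $D(L)$ given in the statement of the corollary.

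For the second equation I would first observe that, because $R(K) \subset \ker \widehat{L}$ and $\Gamma_{L_S} = I - L_S^{-1}\widehat{L}$ acts as the identity on $\ker \widehat{L}$, one has $\Gamma_{L_S}(K\widehat{L}u) = K\widehat{L}u$. Hence $\Gamma_{L_S} K^*\widehat{M}u = K\widehat{L}u$ rearranges to $\Gamma_{L_S}(K^*\widehat{M} - K\widehat{L})u = 0$, and the kernel identity, applicable thanks to $R(K) \cup R(K^*) \subset D(\widehat{L})$, produces \eqref{3.7}. The passage from the quantifier ``$u \in D(\widehat{L}) \cap D(\widehat{M})$'' in \eqref{2.2} to ``$u \in D(L)$'' in \eqref{3.7} is harmless: the necessity half of Theorem \ref{Theorem2.2} actually extracts the second equation on $u \in D(L^*) = D(L)$, while for the sufficiency half, \eqref{3.7} on $D(L)$ combined with the domain description reconstitutes what Theorem \ref{Theorem2.2} requires. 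The only nonmechanical step is recognizing that $\Gamma_{L_S}$ is the identity on $\ker \widehat{L}$, which lets $K\widehat{L}u$ be pulled inside the projection; beyond this I expect the proof to be routine bookkeeping with no serious obstacle.
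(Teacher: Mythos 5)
Your proposal is correct and follows essentially the same route the paper intends: Corollary \ref{corollary3.2} is stated without a separate proof precisely because it is the mechanical translation of Theorem \ref{Theorem2.2} via the identity $\mbox{Ker}\,T_{L_S}=\mbox{Ker}\,\Gamma_{L_S}=D(L_S)$, and your two key observations --- that $\Gamma_{L_S}$ acts as the identity on $\mbox{Ker}\,\widehat{L}$ so the second equation of \eqref{2.2} rearranges to $\Gamma_{L_S}(K^*\widehat{M}-K\widehat{L})u=0$, and that the hypotheses on $R(K)\cup R(K^*)$ make $T_{L_S}$ applicable --- are exactly what is needed.
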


\begin{remark}\label{remark3.3}
By virtue of the one-to-one mapping of $D(L_S)$ onto $D(L):$
\[ v=(I-K\widehat{L})u \;\; \mbox{for all} \;\; u\in D(L), \quad u=(I+K\widehat{L})v \;\; \mbox{for all} \;\; v\in D(L_S), \]
in practice, sometimes it is more convenient to use the following condition that is equivalent to \eqref{3.7}: 
\begin{equation}\label{3.8}
 T_{L_S}(K^*\widehat{M}-K\widehat{L}+K^*\widehat{M}K\widehat{L})v=0 \quad \mbox{for all} \;\; v\in D(L_S).
\end{equation}
It has the practical convenience because $D(L_S)$ is a fixed domain.
\end{remark}
Similarly, we can rephrase Theorem \ref{Theorem3.1} in the following form

\begin{corollary}\label{corollary3.4}
Let there be one correct normal extension  $L_N$ of the formally normal minimal operator $L_0$ in a Hilbert space $H$. Then any other correct extension $L$ of $L_0$ is normal if and only if $L_0\subset L\subset \widehat{L}$, $R(K)\cup R(K^*)\subset D(\widehat{L})\cap D(\widehat{M})$,
\begin{equation}\label{3.9}
 T_{L_N}(K^*\widehat{M}-K\widehat{L})u=0 \quad \mbox{for all} \;\; u\in D(L),
\end{equation}
and
\begin{equation}\label{3.10}
 \widehat{L}K^*= (\widehat{M}K)^*,
\end{equation}
where $T_{L_N}$ is a boundary operator corresponding to the fixed correct extension $L_N$ and
\[ D(L)=\bigl \{u\in D(\widehat{L}): T_{L_N}(I-K\widehat{L})u=0 \bigr\}, \]
and $K$ is the operator determining the boundary correct extension $L$ from the formula \eqref{3.1}.
\end{corollary}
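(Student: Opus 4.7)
The plan is to deduce Corollary \ref{corollary3.4} directly from Theorem \ref{Theorem3.1}, using nothing beyond the defining property of the boundary operator, namely $\mathrm{Ker}\,T_{L_N}=\mathrm{Ker}\,\Gamma_{L_N}=D(L_N)$, already discussed in the paragraph introducing $T_{L_S}$. The only substantive difference between the two statements is that the projection $\Gamma_{L_N}=I-L_N^{-1}\widehat{L}$ is replaced by $T_{L_N}$; so the proof reduces to showing that each clause of Theorem \ref{Theorem3.1} has an equivalent reformulation in terms of $T_{L_N}$. I would treat the ``normality'' condition \eqref{3.10} as literally identical to \eqref{3.3} and focus on translating condition \eqref{3.2}.

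First I would translate the domain description. For $u\in D(\widehat{L})$ the element $(I-K\widehat{L})u$ lies in $D(\widehat{L})$ (since $K\widehat{L}u\in R(K)\subset D(\widehat{L})$ by hypothesis), so $T_{L_N}$ may be applied to it. The equivalence $\Gamma_{L_N}w=0\iff T_{L_N}w=0$ for $w\in D(\widehat{L})$ then yields the stated description $D(L)=\{u\in D(\widehat{L}):T_{L_N}(I-K\widehat{L})u=0\}$. For the second line of \eqref{3.2}, the key observation is that $K\widehat{L}u\in R(K)\subset \mathrm{Ker}\,\widehat{L}$, so $\Gamma_{L_N}(K\widehat{L}u)=K\widehat{L}u$; hence $\Gamma_{L_N}K^*\widehat{M}u=K\widehat{L}u$ may be rewritten as $\Gamma_{L_N}(K^*\widehat{M}-K\widehat{L})u=0$. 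The containment $R(K)\cup R(K^*)\subset D(\widehat{L})\cap D(\widehat{M})$ ensures that $(K^*\widehat{M}-K\widehat{L})u\in D(\widehat{L})$ whenever $u\in D(\widehat{L})\cap D(\widehat{M})$, so the kernel equality again converts this condition into $T_{L_N}(K^*\widehat{M}-K\widehat{L})u=0$, which is precisely \eqref{3.9}. Since $D(L)\subset D(\widehat{L})\cap D(\widehat{M})$ (as noted in the proof of Theorem \ref{Theorem2.2}), quantification over $u\in D(L)$ as in \eqref{3.9} is consistent with the original theorem.

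The main obstacle is essentially one of bookkeeping rather than of substance: one has to check at each step that the argument of $T_{L_N}$ actually lies in $D(\widehat{L})$, and that the rewriting in the second equation uses the containment $R(K)\subset \mathrm{Ker}\,\widehat{L}$. Both facts come for free from the hypotheses on $K$ already imposed in Theorem \ref{Theorem3.1}. Once these containments are noted, the equivalence of conditions \eqref{3.2} and \eqref{3.9} is immediate, and Corollary \ref{corollary3.4} follows from Theorem \ref{Theorem3.1} without any new analytic input.
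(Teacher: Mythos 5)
Your proposal is correct and takes essentially the same route as the paper: the paper offers no separate proof of Corollary \ref{corollary3.4}, presenting it as the rephrasing of Theorem \ref{Theorem3.1} obtained by replacing $\Gamma_{L_N}$ with $T_{L_N}$ via $\mathrm{Ker}\,T_{L_N}=\mathrm{Ker}\,\Gamma_{L_N}=D(L_N)$, exactly as in the passage from Theorem \ref{Theorem2.2} to Corollary \ref{corollary3.2}. Your explicit checks that the arguments of $T_{L_N}$ lie in $D(\widehat{L})$ and that $\Gamma_{L_N}K\widehat{L}u=K\widehat{L}u$ (since $R(K)\subset\mathrm{Ker}\,\widehat{L}$) supply precisely the bookkeeping the paper leaves implicit.
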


\subsection{The Examples}
\label{section4}
\begin{example}\label{example4.1}
We consider the following operator in a Hilbert space $L_2(0,1)$
\begin{equation}\label{4.1}
\widehat{L}y\equiv y''+y'=f,
\end{equation}
to which corresponds the minimal operator $L_0$ with domain
\[ D(L_0)=\bigl\{y\in W_2^2(0,1): \; y(0)=y(1)=y'(0)=y'(1)=0 \bigl\}.\]
\end{example} 
We define the operator $M_0$ as the restriction of $\widehat{M}$ on the set $D(L_0)$. Then the action of the operator $M_0$ has the form
\[ \widehat{M}y \equiv y''-y'=f.\]
We will denote the maximal operators $M^*_0$ and $L^*_0$ by $\widehat{L}$ and $\widehat{M}$, respectively. Then we have 
\[L_0 \subset \widehat{L},\,\,M_0 \subset \widehat{M} \quad \mbox{and} \quad D(\widehat{L})= D(\widehat{M}) = W_2^2(0,1).\] 
Let the operator $L_N$ acts as $\widehat{L}$ with domain
\[ D(L_N)=\bigl\{y\in D(\widehat{L}): \; y(0)+y(1)=0,\,\,y'(0)+y'(1)=0 \bigr\}.\]
We take the operator $L_N$ as the fixed correct extensions of $L_0$. 
Note that $D(L_N)=D(L_N^*)$ and $L_0 \subset L_N \subset \widehat{L}, \; M_0 \subset L_N^* \subset \widehat{M}$.
The inverse operator to $L_N$ has the form
\[
y= L_N^{-1}f=\int\limits^{x}\limits_{0}(1-e^{t-x})f(t)dt-\frac{1}{2}\int\limits^{1}\limits_{0} f(t)dt
+\frac{e^{1-x}}{1+e}\int\limits^{1}\limits_{0}e^{t-1}f(t)dt.
\]
Then $\Gamma_{L_N}$ is defined as
\[
\Gamma_{L_N} y=\frac{y(0)+y(1)}{2}+\biggl(\frac{1}{2}-\frac{e^{1-x}}{1+e}\biggr)[y'(0)+y'(1)].
\]
And $\Gamma_{L_N^*}=I-(L_N^*)^{-1}\widehat{M}$ has the following form
\[
\Gamma_{L_N^*}y=\frac{y(0)+y(1)}{2}+\biggl(\frac{e^x}{1+e}-\frac{1}{2}\biggr)[y'(0)+y'(1)].
\]
The correct extension $L$ of $L_0$ with the property $D(L)=D(L^*)$ is a boundary correct  extension. Their inverses are described in the following form
\[ y= L^{-1}f=L_N^{-1}f+Kf \quad \mbox{for all} \;\; f \in L_2(0,1),\]
where $K$ is a bounded linear operator in $L_2(0,1)$ with the properties
\[ R(K)\subset \mbox{Ker}\, \widehat{L}, \quad R(L_0)\subset \mbox{Ker}\, K. \]
In our case, such operators are exhausted by the following operators
\[ Kf=\int\limits^{1}\limits_{0}f(t)(\overline{a_{11}} +\overline{a_{12}}e^t)dt+e^{-x}\int\limits^{1}\limits_{0}f(t)(\overline{a_{21}}+\overline{a_{22}}e^t)dt,
\]
where $a_{ij},\,\,i,j=1,2$ are arbitrary complex numbers.
Then
\[ K^*f=(a_{11}+a_{12}e^x)\int\limits^{1}\limits_{0}f(t)dt +(a_{21}+a_{22}e^x)\int\limits^{1}\limits_{0} e^{-t}f(t)dt.
\]
It is known that the direct operator $L$ acts as $\widehat{L}$ from \eqref{4.1} and the domain has the form
\[ D(L)=\bigl\{y\in D(\widehat{L}): \; \Gamma_{L_N}(I-K\widehat{L})y=0 \bigr\}. \]
In view of Corollary \ref{corollary3.2}, the domain of $L$ can be defined in another way
\[ \begin{split} 
D(L)=\biggl\{y\in D(\widehat{L}): \; &y(0)+y(1)=(K\widehat{L}y)(0)+(K\widehat{L}y)(1), \\
&y'(0)+y'(1)=\Bigl(\frac{d}{dx}K\widehat{L}y \Bigr)(0)+\Bigl(\frac{d}{dx}K\widehat{L}y \Bigr)(1) \biggr\}.
\end{split} \]
First, we will find the correct extensions $L$ such that $D(L)=D(L^*)$. Taking into account Remark \ref{remark3.3}, let the operator $K$ satisfies the condition \eqref{3.8}. Then we obtain the system of equations:
\[
\left \{
\begin{array}{rl}
&4(a_{11}+\overline{a_{11}})+2(e+1) \Bigl[\dfrac{\overline{a_{21}}}{e}+a_{12} \Bigr]\cdot A=0, \\
\\
&-4(a_{11}-\overline{a_{11}})-2(e+1)(a_{12}-\overline{a_{12}})-2\dfrac{e+1}{e}(a_{21}-\overline{a_{21}})\\
& \qquad \qquad -\dfrac{(e+1)^2}{e}(a_{22}-\overline{a_{22}})+\Bigl[4a_{12}+2\dfrac{e+1}{e}a_{22} \Bigr] \cdot A=0, \\
\\
&-\dfrac{1}{e}\overline{a_{21}}+a_{12}+\dfrac{2}{e}a_{12} \Bigl[\overline{a_{21}}(e-1)+\overline{a_{22}}\dfrac{e^2-1}{2} \Bigr]=0, \\
\\
&-\dfrac{1}{e}[2\overline{a_{21}}+\overline{a_{22}}(1+e)]-2a_{12}-\dfrac{e+1}{e}a_{22}\\
& \qquad \qquad -4\dfrac{a_{12}}{e} \Bigl[\overline{a_{21}}+
\overline{a_{22}}\dfrac{e^2-1}{2} \Bigr]-2\dfrac{e+1}{e^2}a_{22} \Bigl[\overline{a_{21}}(e-1)+\overline{a_{22}}\dfrac{e^2-1}{2}\Bigr]=0,
\end{array} \right. \]
where
\[
A=2(e-1)\overline{a_{11}}+(e^2-1)\overline{a_{12}}+\frac{e+1}{e} \Bigl[\overline{a_{21}}(e-1)+\overline{a_{22}}\frac{e^2-1}{2} \Bigr].
\]
Solutions of the system of equations with respect to $a_{ij},\,\,\,i,j=1,2,$ define the operators $K$ that guarantees the equality $D(L)=D(L^*)$. They will correspond to the following cases:
\begin{equation*}
\begin{split}
&I)\;\;\;D(L)=\Bigl\{y\in D(\widehat{L}): \; y(0)=0, \quad y(1)=0 \Bigr\},\\
&II)\;\;D(L)=\Bigl\{y\in D(\widehat{L}): \; y(0)=\frac{a-i}{a+i}y(1), \quad y'(0)=\frac{a-i}{a+i}y'(1), \quad a\in \mathbb{R}, \\
& \qquad \qquad \qquad \mbox{where} \; \mathbb{R} \: \mbox{is the space of real numbers} \Bigr\},\\
&III)\;D(L)=\Bigl\{y\in D(\widehat{L}): \; ay(0)+\bar{b}y(1)=0, \quad y(1)=by'(0)+ay'(1),\\
& \qquad \quad a\in \mathbb{R}, \quad a\neq 0, \quad b \in \mathbb{C}, \quad |b|^2=a^2, \;\; \mbox{where} \; \mathbb{C} \: \mbox{is the space of complex numbers} \Bigr\}.
\end{split}
\end{equation*}

We use the criterion given in Theorem \ref{Theorem3.1} to find all correct normal extensions $L$ of the minimal operator $L_0$. It is easy to verify the formal normality of $L_0$ and the normality of $L_N$.
The equality $D(L)=D(L^*)$ is necessary for the normality of $L$. 
They correspond to three cases of $I)-III)$ described above. Now, if the operator $K$ satisfies \eqref{3.3}, then the operator $L$ is a normal. The condition \eqref{3.3} is equivalent to the following
\[a_{21}=0, \quad a_{12}=0.\]
Therefore, the operator $K$ takes the form
\[ Kf=\overline{a_{11}}\int\limits^{1}\limits_{0}f(t)dt+
\overline{a_{22}}e^{-x}\int\limits^{1}\limits_{0}e^{t}f(t)dt.\]
Then operators $L$ which act as $\widehat{L}$ from \eqref{4.1} turn out to be the normal correct extensions and with the domain
\[ D(L)=\Bigl\{y\in D(\widehat{L}): \; y(0)=\frac{a-i}{a+i}y(1), \quad y'(0)=\frac{a-i}{a+i}y'(1), \quad a\in \mathbb{R} \Bigr\}. \]
From three cases of $I)-III)$ are suitable only the case $II)$.

\begin{example}\label{example4.2} Let in the Hilbert space $L_2(\Omega)$, where $\Omega=\{(x,y): \; 0<x<1,\; 0<y<1\}$, we consider the minimal operator $L_0$ generated by the Cauchy-Riemann differential operator 
\begin{equation}\label{4.2}
\widehat{L}u\equiv \frac{\partial u}{\partial x}+i\frac{\partial u}{\partial y} =f(x,y).
\end{equation}
Then
\[ D(L_0)=\bigl\{u\in W_2^1(\Omega): \; T_{L_0}u=0 \bigr\}, \]
where $T_{L_0}$ is a boundary operator defined as the trace of function $u\in W_2^1(\Omega)$ on the boundary of $\partial\Omega$. 
\end{example}

The action of $\widehat{M}$ will have the form
\[ \widehat{M}u\equiv-\frac{\partial u}{\partial x}+i\frac{\partial u}{\partial y}=f(x,y).\]
Domains of the operators $\widehat{L}$ and $\widehat{M}$ have the form
\[ D(\widehat{L})=\bigl\{u\in L_2(\Omega): \; \widehat{L}u\in L_2(\Omega) \bigr\},\]
\[ D(\widehat{M})=\bigl\{u\in L_2(\Omega): \; \widehat{M}u\in L_2(\Omega) \bigr\},\]
respectively.
If we define the boundary operator $T_{L_N}$ the following way
\[
T_{L_N}u=\left(%
\begin{array}{c}
  u(0,y)+u(1,y) \\
  u(x,0)+u(x,1) \\
\end{array}%
\right) \;\;\;\; \mbox{for all} \;\; u\in D(\widehat{L}),
\]
then the operator $L_N$ acting as $\widehat{L}$ with the domain
\[ D(L_N)=\bigl\{u\in D(\widehat{L}): \; T_{L_N}u=0 \bigr\},\]
is the correct extension of $L_0$. 
It is easy to verify that $L_0$ is formally normal and $L_N$ is normal, and in addition $L_0\subset L\subset \widehat{L}$.

We are interested in the normal boundary correct extensions.
Let us clarify some properties of the operator $K$:
\begin{equation*}
\begin{split}
&1)\: R(K) \subset W_2^1(\Omega);\\
&2)\: (Kf)(x+iy);\\
&3)\: (K^*f)(x-iy).
\end{split}
\end{equation*}
The first property follows from the fact that

\begin{assertion}\label{assertion4.3}
The domain of any normal correct extension $L$ of the minimal operator $L_0$ generated by the differential operator
\eqref{4.2} has the property: 
\[ D(L) \subset W_2^1(\Omega).\]
\end{assertion}

\begin{proof} It follows from Theorem 2 of Plesner and Rohlin (see {\cite{Plesner}}). Now we formulate this theorem: "For each pair of adjoint normal operators $A$ and $A^*$ there exists one and only one pair of self-adjoint operators $A_1$ and $A_2$, satisfying the condition
\[ A=A_1+iA_2,\;\;\;\;A^*=A_1-iA_2, \]
where the operators $A_1$ and $A_2$ commute".
\end{proof}

The second property follows from the condition $R(K) \subset \mbox{Ker}\, \widehat{L}$. The third property follows from the condition $R(L_0) \subset \mbox{Ker}\, K$.
Further from the conditions \eqref{3.9} and \eqref{3.10} obtain  the operators $K$ for which the correct boundary extension $L$ will be normal.

It follows from Assertion \ref{assertion4.3} that $L_N^{-1},\;K$, and $L^{-1}$ are compact operators in $L_2(\Omega)$. This means that the normal correct extension $L$ of $L_0$ is the operator of the discrete spectrum. Hence we have that $L$ has a complete orthonormal system of eigenfunctions.

For clarity, the check of normality by Theorem \ref{Theorem3.1}, we consider the special case. Let $K$ will be an integral operator of the form
\[ Kf=\int\limits^{1}\limits_{0}\int\limits^{1}\limits_{0} \mathcal{K}(x,y; \,\xi,\eta)f(\xi,\eta)d\xi d\eta. \]
It follows from properties 1) and 2) that
\[ Kf=\int\limits^{1}\limits_{0}\int\limits^{1}\limits_{0} \mathcal{K}(x+iy, \,\xi+i\eta)f(\xi,\eta)d\xi d\eta. \]
From the condition \eqref{3.3} of Theorem \ref{Theorem3.1}, we get that
\[ Kf=\int\limits^{1}\limits_{0}\int\limits^{1}\limits_{0} \mathcal{K}(x-\xi+i(y-\eta))f(\xi,\eta)d\xi d\eta. \]
Using the condition \eqref{3.2} of Theorem \ref{Theorem3.1} for the operator $K$, we obtain all normal correct extensions.
We will not give this condition on the kernel $\mathcal{K}(x-\xi+i(y-\eta))$, because of the cumbersome to write.

To demonstrate the mechanism of checking the condition \eqref{3.2}, we consider the special case when
\[ \mathcal{K}(x-\xi+i(y-\eta))=ae^{i\pi(x-\xi+i(y-\eta))},\]
where $a\in \mathbb{C}$ is a complex number of the form $a=a_1+ia_2.$ Then the condition \eqref{3.2} is equivalent to
\[ 2a_2+(a_1^2+a_2^2)(e^\pi-e^{-\pi})=0. \]
There are two kinds of solutions of this equation:
\begin{equation*}
\begin{split}
&I.\;\;\;a_1=0,\;\;\; a_2=\frac{2}{e^{-\pi}-e^\pi};\\
&II.\;\; a_2=\frac{-1\pm \sqrt{1-[a_1(e^\pi-e^{-\pi})]^2}}{e^\pi-e^{-\pi}}, \quad \mbox{where} \quad |a_1|\leq \frac{1}{e^\pi-e^{-\pi}}.\\
\end{split}
\end{equation*}
Then in the case of $II$, the correct extension corresponding to the following boundary problem
\begin{equation*}
\begin{split}
&\widehat{L}u\equiv \frac{\partial u}{\partial x}+i\frac{\partial u}{\partial y}=f(x,y) \quad \mbox{for all} \;\; f\in L_2(\Omega),\\
&D(L)=\biggl\{u\in W_2^1(\Omega): \; u(0,y)+u(1,y)=0, \quad 0\leq y\leq1,\\
& \qquad \qquad u(x,0)+u(x,1)=ia(e^\pi+1)\int\limits^{1}\limits_{0}e^{i\pi(x-\xi)}u(\xi,1)d\xi\\
& \qquad \qquad -ia(e^{-\pi}+1)\int\limits^{1}\limits_{0}e^{i\pi(x-\xi)}u(\xi,0)d\xi, \quad
0\leq x\leq1 \biggr\}
\end{split}
\end{equation*}
is normal, where $a=a_1+ia_2$,  or in the case of $I$, the correct extension corresponding to the boundary problem
\[
\begin{split}
& D(L)=\biggl\{u\in W_2^1(\Omega): \; u(0,y)+u(1,y)=0,\\ 
& \qquad \qquad u(x,0)+u(x,1)=2\int\limits^{1}\limits_{0}e^{i\pi(x-\xi)}u(\xi,1)d\xi\biggr\},
\end{split}
\]
is normal.

All normal correct extensions $L$ have a compact inverse operator because of $D(L) \subset W_2^1(\Omega)$. Therefore, their eigenfunctions create an orthonormal basis in $L_2(\Omega)$. In the particular case when
\[ \mathcal{K}(x,y; \,\xi,\eta)=\frac{2i}{e^{-\pi}-e^{\pi}}\cdot e^{i\pi(x-\xi+i(y-\eta))}, \]
we obtain the orthonormal basis in the following form:
\[
u_{k,n}(x,y)=\left \{
\begin{array}{lll}
& e^{2n\pi iy+i\pi x}, \quad &n=0,\pm1,\pm2,\ldots \\
& e^{(2k+1)\pi ix+(2n+1)\pi iy}, \quad &k=\pm1,\pm2,\ldots, \; n=0,\pm1,\pm2,\ldots \\
\end{array}
\right.
\] 
and the corresponding eigenvalues
\[
\lambda_{k,n}=\left \{
\begin{array}{lll}
& i\pi -2n\pi, \quad \quad &n=0,\pm1,\pm2,\ldots \\
& (2k+1)\pi i-(2n+1)\pi, \quad &k=\pm1,\pm2,\ldots, \; n=0,\pm1,\pm2,\ldots. \\
\end{array}
\right.
\] 

Thus, this method allows us to check for normality of an unbounded operator. Preliminary it is necessary to clarify the question of the existence of at least one normal extension.
For the existence of a normal extension we need that the minimal operator must be formally normal.

\begin{remark}\label{remark4.4}
If in Example \ref{example4.2} the square area $\Omega $ is replaced by the unit circle, then the minimal operator $L_0$ will not be formally normal. Thus in this case,  there are no normal extensions of $L_0$ in $L_2(\Omega)$. 
\end{remark}

\begin{remark}\label{remark4.5}
When the minimal operator $L_0$ is symmetric and the fixed operator $L_N$ is self-adjoint then the conditions of Theorem \ref{Theorem3.1} are equivalent to $K=K^*$ and we have all the self-adjoint correct extensions.
\end{remark}

\vskip 0.5 cm

\newpage
%=================References====================
%\end{fulltext}

\vskip 0.5 cm

\begin{flushleft}
   Bazarkan Nuroldinovich Biyarov \\
   Department of Fundamental Mathematics\\
   L.N. Gumilyov Eurasian National University \\
   2 Mirzoyan St,\\
   010008 Astana, Kazakhstan\\
   E-mail: bbiyarov@gmail.com
\end{flushleft} 

\end{document}